\documentclass{article}
\usepackage[colorlinks, citecolor=blue]{hyperref}
\hypersetup{linkcolor=blue, anchorcolor=blue, citecolor=blue }

\usepackage{amscd}
\usepackage{times}
\usepackage{latexsym}
\usepackage{amsmath}
\usepackage{amssymb}
\usepackage{amsthm}
\usepackage{dsfont}
\usepackage{mathrsfs}
\usepackage{epsfig}
\usepackage{graphicx,graphics}
\usepackage{subfigure}
\usepackage{pstricks}
\usepackage{pst-all}
\usepackage[body={15cm,23cm}, top=2cm]{geometry}
\usepackage{bm}
\usepackage{multirow}
\usepackage{verbatim}
\usepackage{xcolor}
\usepackage{enumerate}
\usepackage{lineno}
\usepackage{algorithm}
\usepackage{algorithmic}
\usepackage{cases}

\newtheorem{theorem}{Theorem}[section]

\newtheorem{proposition}[theorem]{Proposition}
\newtheorem{lemma}[theorem]{Lemma}
\newtheorem{example}[theorem]{Example}

\newrgbcolor{pink1}{1 0.71 0.77}
\newrgbcolor{Seashell}{1 0.9607 0.9333}
\newrgbcolor{Snow}{1 0.9804 0.9804}
\newrgbcolor{Pink4}{0.5451 0.3882 0.4235}
\newrgbcolor{Honeydew4}{0.5137 0.5451 0.5137}
\newrgbcolor{Honeydew1}{0.9411 1 0.9411}
\newrgbcolor{MistyRose4}{0.5451 0.4902 0.4823}
\newrgbcolor{MistyRose1}{1 0.8941 0.8823}
\newrgbcolor{MistyRose3}{0.8039 0.7176 0.7098}
\setlength{\parindent}{1.5em}

\pagestyle{plain}

\newcommand{\R}{\mathbf{R}}
\newcommand{\q}{(\mathrm{Q}_\omega)\,}
\newcommand{\qa}{(\mathcal{Q}_r)\,}

\newcommand{\w}{\omega}

\newcommand{\N}{\mathbb{N}}

\def \st {\mathrm{ s.t. }}
\def \supp {\mathrm{ supp}}

\begin{document}
\title { Every critical point of an $\ell_0$ composite minimization problem is a local minimizer}
\author{ Xue Feng  \qquad Chunlin Wu \footnote{Corresponding author. Email: wucl@nankai.edu.cn} \\
\small  \emph{School of Mathematical Sciences,
Nankai University, Tianjin, China, 300071}
}
\date{}

\maketitle

\emph{\textbf{Abstract}
Nowadays, $\ell_0$ optimization model has shown its superiority when pursuing sparsity in many areas.
For this nonconvex problem, most of the algorithms can only converge to one of its critical points.
In this paper, we consider a general $\ell_0$ regularized minimization problem, where the $\ell_0$ "norm" is composited with a continuous map.
Under some mild assumptions, we show that every critical point of this problem is a local minimizer, which improves the convergence results of existing algorithms.
Surprisingly, this conclusion does not hold for low rank minimization, a natural matrix extension of $\ell_0$ "norm" of a vector.
}

\noindent \textbf{Keywords}: critical point, local minimizer, $\ell_0$ minimization, rank minimization

\section{Introduction}
The $\ell_0$ "norm" is to count the number of nonzero elements in a vector,
and is a good measure of sparsity.
By now, $\ell_0$ minimization is at a heart position for sparse reconstruction
and has been adopted in many fields such as signal processing, dictionary learning, compressive sensing, machine learning, classification, morphologic component analysis, subset selection, and so on\cite{bao2016image,Baraniuk2007Compressive,Dong2013An,Ji2008Bayesian,Miller1990Subset,shen2016wavelet,Zhang2013}.
In this paper, we consider the following $\ell_0$ composite regularization problem:
\begin{equation}
\label{model}
 \min_{x \in \R^N} f(x) := f_1(x) + \| g(x)\|_0 + \delta_X(x),
\end{equation}
where $\delta_X(x)$ is an indicator function with $X$ as the domain of $x$,
and the following assumptions hold:
\begin{itemize}
  \item $f_1$ is proper, continuous and convex;  $\mathsf{dom} f_1$ is open;
  \item $g$ is a continuous map from $\R^N$ to $\R^M$ with $\ker{g_i},i=1,\cdots,M$ convex and closed;
  \item $X$ is convex and closed.
\end{itemize}
These assumptions are trivial, and a lot of functions meet the requirements.
For instance, in many the inverse problems, we have $f_1(x) = \|A x - b \|^2$ where $A$ is a given matrix;
$g$ is the identity map or other linear maps;
$X$ could be the box constraints, convex cones or others.

The problem \eqref{model} is nonconvex and nonsmooth, and it is an NP-hard problem to obtain its minimizer.
For algorithms to solve \eqref{model}, although we expect them to find one of its local minimizers which show good sparse properties \cite{Niko13l0,Feng_2018},
most of them are theoretically guaranteed to only converge to a critical point.
Specifically,
when $g$ is an identity map, $f_1$ is a least square function and $X$ equals $\R^N$, the problem \eqref{model} reduces to be
\begin{equation}
\label{modelcs}
\min_{x \in \R^N} \frac{\alpha}{2} \| Ax - b \|^2 + \| x \|_0,
\end{equation}
a key minimization model in compressed sensing.
The first type of algorithms to solve \eqref{modelcs} are greedy methods like orthogonal matching pursuit (OMP) \cite{tropp2007signal} and its variant CoSaMP\cite{needell2009cosamp}, which are easy to implement.
The OMP method was originally proposed to solve an $\ell_0$ constrained problem which is equivalent to \eqref{modelcs} \cite{Zhang2016On,Nikolova2016Relationship}.
The second type of methods include penalty decomposition (PD) method \cite{Lu2013Sparse} and iterative hard thresholding method (IHT)\cite{blumensath2008iterative,lu2014iterativesss}, which globally or locally (with subsequence) converge to some local minimizers.
The third type of algorithms are the so-called descent methods such as forward-backward splitting method, proximal alternating linearized method, proximal block coordinate descent (BCD) method\cite{attouch2010proximal,attouch2013convergence,Bolte2014}.
They are proven convergent to critical points of \eqref{modelcs}.
As can be seen, even for this simplest case, there are still some algorithms shown convergent to only critical points.

When $g$ is a general map, the minimization model \eqref{model} has wide applications in image processing and machine learning.
For example, when $g$ is a gradient operator, the model can be applied to image smoothing, debluring and denoising.
Penalty and alternating minimization methods work quite well to solve it without convergence guarantee\cite{xu2011image,Xu2013Unnatural}.
A wavelet frame based image restoration problem where $g$ denotes a fast tensor product framelet decomposition, is solved by PD method in \cite{Zhang2013} and doubly augmented Lagrangian (MDAL) method in \cite{Dong2013An}.
Neither of these has convergence result on the outer iteration procedures.
When $g$ is a surjective linear operator and some other assumptions hold,
Bregman alternating direction method with multipliers(ADMM) in \cite{wang2014convergence} and proximal ADMM in \cite{li2015global} are proven very recently to be able to generate sequences converging to a critical point of \eqref{model}, by the powerful KL property.
So far, in this general case, the best theoretical result is the convergence to some critical points.

From the above discussion,
it is natural to ask whether these convergent algorithms other than IHT and PD method can find a local minimizer of \eqref{model}.
Thus, the relationship between the local minimizers and critical points of \eqref{model} is necessary to study.
However, the related results are scarce in the literature, except for some very special cases\cite{Feng2018,bao2016image}.

The rank of a matrix is always considered as a natural extension of  the $\ell_0$ ``norm'' of a vector.
Many theoretical results on robust sparse recovery are generalized to low-rank reconstruction which arises in many applications like system identification\cite{liu2009interior}, data mining and pattern recognition\cite{elden2019matrix}, low-dimensional embedding\cite{linial1995geometry} and matrix completion\cite{wang2015orthogonal}.
For example, as the $\ell_1$ norm of a vector is the well-known convex relaxation of $\ell_0$ ``norm'',
a heuristic idea to approximate the rank of matrices is to use the nuclear norm (the sum of singular values), which is the most successful tool recently.
The recovery guarantee of the rank minimization and its relaxation problems is provided under the rank restricted isometry property(RIP), an adaption of the RIP in vector case\cite{recht2010guaranteed,wang2015orthogonal}.
Meanwhile, lots of low rank minimization algorithms are also generalized from solvers of $\ell_0$ regularized problems.
For example, the orthogonal rank-one matrix pursuit method for low rank matrix completion in \cite{wang2015orthogonal} is based on the OMP method.
The authors in \cite{huang2018rank} used hard thresholding operation for the matrix singular values to solve rank minimization for image denoising.
\cite{lu2015penalty} proposed a corresponding PD method for general rank minimization.
In this context, it is therefore natural to ask the same question about the local minimizers and the critical points of rank minimization.

In this paper, we show in Section \ref{l0r} that every critical point of the $\ell_0$ composite model \eqref{model} is actually a local minimizer.
Surprisingly, this result does not necessarily hold for the rank minimization as shown in Section \ref{rankr}.

\section{Notations}

Denote $\N = \{ 1,2,\dots, M \}$.
For any $x \in \R^N$, we define
$$
\supp_g(x) := \{ i  \;:\; g_i(x) \neq 0 \}.
$$
Denote the set of local minimizers of $f$ as $L_f$:
$$
L_f := \{ x \in \R^N \,:\, x \text{ is a local minimizer of } f \}.
$$
A critical point $x$ of $f$ means $0 \in \partial f(x)$.

For any given $\w \subseteq \N $, we define the following problem:
\begin{equation}
\label{qw}
\q
\qquad \qquad	
\left\{
\begin{aligned}
\min_{x \in X} \;\; & f_1(x) ,\\
\st\;\;\;  &g_i(x) = 0, \;\; i \in \w^c.
\end{aligned}
\right.	
\end{equation}
We denote
\begin{equation}
\label{cw}
C_{\w} \,:=\, \{x \in \mathbf{R}^N \;:\; g_i(x)= 0, \forall\, i \in \omega^c\}.
\end{equation}
Then, the feasible domain of $(\mathrm{Q}_\omega)$ is $X \cap C_{\w}$ which is convex and closed.
Therefore, $\delta_{X \cap C_{\w}}(x)$ is a regular function(Theorem 6.9, p203; Exercise 8.14, p310, \cite{Rockafellar1998Variational}).
Besides, $(\mathrm{Q}_\omega)$ is convex and equivalent to the following unconstrained problem:
$$
\min_{x \in \R^N}  f_1(x) + \delta_{X \cap C_{\w}}(x).
$$


\section{Every critical point of $f$ is a local minimizer}
\label{l0r}

In this section,
we will show that every critical point of $f$ is a local minimizer.

%
%

\begin{theorem}
	\label{crlo}
	Suppose that $\bar{x}$ is a critical point of $f$.
	Then $\bar{x}$ is a local minimizer of $f$.
\end{theorem}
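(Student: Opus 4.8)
The plan is to associate to $\bar x$ the index set $\w:=\supp_g(\bar x)$, to show that the critical point condition forces $\bar x$ to be a \emph{global} minimizer of the convex problem $\q$, and then to obtain local minimality of $f$ by closing a small ball $B(\bar x,r)$ around $\bar x$ with a short case analysis on whether any of the constraints $g_i(x)=0$, $i\in\w^c$, is violated. Throughout, $\partial$ denotes the limiting subdifferential, $\hat\partial$ the regular one, and I write $\tilde f:=f_1+\delta_{X\cap C_\w}$ for the (proper, lsc, convex) objective of $\q$, so that $f(\bar x)=f_1(\bar x)+|\w|=\tilde f(\bar x)+|\w|$ and $\bar x\in X\cap C_\w\subseteq\mathsf{dom}f_1$.

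First I would record two elementary facts. Since each $g_i$ with $i\in\w$ is continuous and $g_i(\bar x)\neq0$, there is $r_1>0$ such that $g_i(x)\neq0$ for all $i\in\w$ and all $x\in B(\bar x,r_1)$; hence $\supp_g(x)\supseteq\w$, so $\|g(x)\|_0\ge|\w|$, on $B(\bar x,r_1)$, with equality precisely when $x\in C_\w$. On the other hand, for \emph{any} $x$ one has $x\in X\cap C_\w\Rightarrow\supp_g(x)\subseteq\w\Rightarrow\|g(x)\|_0\le|\w|$, which yields the global comparison $f\le\tilde f+|\w|$ on $\R^N$, with equality at every point $x\in X$ with $\supp_g(x)=\w$, in particular at $\bar x$.

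The core of the argument is to deduce $0\in\partial\tilde f(\bar x)$ from $0\in\partial f(\bar x)$. I would argue that if $x\to\bar x$ with $f(x)\to f(\bar x)$, then (because $\|g(\cdot)\|_0$ is integer-valued and $\ge|\w|$ near $\bar x$, $\delta_X$ is $\{0,+\infty\}$-valued, and $f_1$ is continuous at $\bar x$) one must eventually have $x\in X\cap C_\w$ and $\|g(x)\|_0=|\w|$, i.e. $\supp_g(x)=\w$; at each such $x$ the inequality $f\le\tilde f+|\w|$ is an equality, so the standard monotonicity rule for regular subdifferentials gives $\hat\partial f(x)\subseteq\hat\partial\tilde f(x)=\partial\tilde f(x)$ (convexity of $\tilde f$ giving the last identity). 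Passing to the limit in the definition of $\partial f(\bar x)$ and using the closedness of the graph of $\partial\tilde f$ for the proper lsc convex function $\tilde f$, I obtain $\partial f(\bar x)\subseteq\partial\tilde f(\bar x)$; hence $0\in\partial\tilde f(\bar x)$, and by convexity $\bar x$ is a global minimizer of $\q$, i.e. $f_1(x)\ge f_1(\bar x)$ for all $x\in X\cap C_\w$. An alternative route to the same conclusion is the sum rule for the limiting subdifferential, $\partial f(\bar x)\subseteq\partial f_1(\bar x)+N_X(\bar x)+N_{C_\w}(\bar x)\subseteq\partial f_1(\bar x)+N_{X\cap C_\w}(\bar x)=\partial\tilde f(\bar x)$, bounding the limiting subdifferential of $\|g(\cdot)\|_0$ at $\bar x$ by $N_{C_\w}(\bar x)$ and using that $\delta_{X\cap C_\w}$ is regular.

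Finally, using continuity of $f_1$ at $\bar x$, pick $r\in(0,r_1]$ with $f_1(x)>f_1(\bar x)-1$ on $B(\bar x,r)$, and verify $f(x)\ge f(\bar x)$ for $x\in B(\bar x,r)$ in three cases: if $x\notin X$ then $f(x)=+\infty$; if $x\in X\cap C_\w$ then $\|g(x)\|_0=|\w|$ by the choice of $r_1$, so $f(x)=f_1(x)+|\w|\ge f_1(\bar x)+|\w|=f(\bar x)$ by optimality of $\bar x$ for $\q$; and if $x\in X\setminus C_\w$ then $g_i(x)\neq0$ for some $i\in\w^c$, so $\|g(x)\|_0\ge|\w|+1$ and $f(x)\ge f_1(x)+|\w|+1>f_1(\bar x)+|\w|=f(\bar x)$. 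Hence $\bar x\in L_f$. I expect the delicate point to be exactly the passage $0\in\partial f(\bar x)\Rightarrow0\in\partial\tilde f(\bar x)$: because ``critical point'' is defined through the limiting subdifferential, the comparison between $f$ and $\tilde f$ must be exploited not only at $\bar x$ but along every $f$-attentive sequence converging to it, and it is there that the integrality of $\|g(\cdot)\|_0$ (which confines such sequences to $X\cap C_\w$) and the graph-closedness of $\partial\tilde f$ do the real work.
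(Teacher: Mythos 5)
Your proof is correct, and it reaches the paper's key intermediate conclusion — that $\bar x$ solves the convex problem $\q$ with $\w=\supp_g(\bar x)$ — by a genuinely different mechanism. The paper's route is: establish that $f_2=\|g(\cdot)\|_0+\delta_X$ is \emph{regular} with $\partial f_2(\bar x)=\partial\delta_{X\cap C_\w}(\bar x)$ (Lemma 3.3, which requires computing the regular, limiting and horizon subdifferentials separately), invoke the exact sum rule $\partial f=\partial f_1+\partial f_2$ (Lemma 3.4, via Corollary 10.9 of Rockafellar--Wets), and then read off that $0\in\partial(f_1+\delta_{X\cap C_\w})(\bar x)$; finally it cites the full characterization $L_f=\bigcup_\w\{x: x\text{ solves }\q\}$ (Theorem 3.2). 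You instead prove only the one-sided inclusion $\partial f(\bar x)\subseteq\partial(f_1+\delta_{X\cap C_\w})(\bar x)$, which is all that is needed: the global majorization $f\le f_1+\delta_{X\cap C_\w}+|\w|$ with equality along every $f$-attentive sequence converging to $\bar x$ (forced by integrality of $\|g(\cdot)\|_0$ and continuity of $f_1$ and of the $g_i$) gives $\hat\partial f(y)\subseteq\hat\partial(f_1+\delta_{X\cap C_\w})(y)$ at the relevant points, and graph-closedness of the convex subdifferential passes this to the limit. This avoids the regularity of $f_2$, the horizon-subdifferential computations, and the exact sum rule altogether; what it gives up is the reusable identity $\partial f(x)=\partial f_1(x)+\partial\delta_{X\cap C_{\supp_g(x)}}(x)$, which the paper also exploits elsewhere. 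Your closing case analysis on $B(\bar x,r)$ is essentially the first half of the paper's proof of Theorem 3.2, inlined. Two minor caveats: your ``alternative route'' via a sum rule $\partial f(\bar x)\subseteq\partial f_1(\bar x)+\partial\delta_X(\bar x)+\partial\delta_{C_\w}(\bar x)$ would itself need a qualification condition and a separate bound on the limiting subdifferential of $\|g(\cdot)\|_0$, so it should not be leaned on; and you should note explicitly that $B(\bar x,r)\subseteq\mathsf{dom}f_1$ for small $r$ (which holds since $\mathsf{dom}f_1$ is open). Neither affects the validity of your main argument.
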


The proof is given later.

For better expression, we denote
\begin{equation}
f_2: = \| g(x)\|_0 + \delta_X(x).
\end{equation}

\begin{lemma} \label{regu}
	The function $f_2$ is lsc.
	Specially,
	if $\bar{x} \in X$, then there exists an open ball at $\bar{x}$, denoted by $B(\bar{x})$, such that $\forall\, x \in B(\bar{x})$, one of the following two cases holds:
	\begin{align}
	&\mbox{(a)} \quad  f_2(x)= f_2( \bar{x})
	\Longleftrightarrow \supp_g(x) = \supp_g(\bar{x}), x \in X \Longleftrightarrow
	x \in B(\bar{x}) \cap C_{\supp_g(\bar{x})} \cap X,  \label{sigmaa}  \\
	&\mbox{(b)} \quad  f_2(x) \geq f_2(\bar{x}) +1  ,  \label{sigmab}
	\end{align}
	where $C_{\supp_g(\bar{x})}$ is defined in \eqref{cw}.
\end{lemma}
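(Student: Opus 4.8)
The plan is to exploit the continuity of $g$ to show that on a small enough ball the support $\supp_g$ can only grow, never shrink, and that any growth costs at least one unit of $\|g(\cdot)\|_0$. First I would dispatch the lower semicontinuity claim: since $X$ is closed, $\delta_X$ is lsc; and for each $i$ the map $x\mapsto \mathds{1}[g_i(x)\neq 0]$ is lsc because continuity of $g_i$ makes $\{x:g_i(x)\neq 0\}$ open (so this indicator is locally equal to $1$ wherever it equals $1$, and is $\geq 0$ elsewhere). Hence $\|g(x)\|_0=\sum_{i=1}^M \mathds{1}[g_i(x)\neq 0]$ is a finite sum of lsc functions, and $f_2=\|g(\cdot)\|_0+\delta_X$ is lsc as a sum of lsc functions with values in $(-\infty,+\infty]$.

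For the local structure, fix $\bar x\in X$ and set $S:=\supp_g(\bar x)$. For each $i\in S$ we have $g_i(\bar x)\neq 0$, so by continuity there is $r_i>0$ with $g_i(x)\neq 0$ on $B(\bar x,r_i)$; I would then take $B(\bar x):=B(\bar x,\min_{i\in S}r_i)$ (any positive radius if $S=\emptyset$). On this ball $S\subseteq\supp_g(x)$ for every $x$, so $\|g(x)\|_0\geq\|g(\bar x)\|_0$, with equality precisely when $\supp_g(x)=S$. Note also that $f_2(\bar x)$ is finite, since $\delta_X(\bar x)=0$ and $\|g(\bar x)\|_0\leq M$.

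Next I would split $x\in B(\bar x)$ into two cases. If $x\in X$ and $\supp_g(x)=S$, then $\|g(x)\|_0=|S|$ and $\delta_X(x)=0=\delta_X(\bar x)$, giving $f_2(x)=f_2(\bar x)$; conversely $f_2(x)=f_2(\bar x)$ forces $f_2(x)<\infty$, hence $x\in X$, and then $\|g(x)\|_0=|S|$, which together with $S\subseteq\supp_g(x)$ forces $\supp_g(x)=S$. Finally, for $x\in X$ the condition $\supp_g(x)=S$ means exactly $g_i(x)=0$ for all $i\in S^c$, i.e. $x\in C_{\supp_g(\bar x)}$, and combined with $x\in B(\bar x)\cap X$ this is $x\in B(\bar x)\cap C_{\supp_g(\bar x)}\cap X$; the reverse implication is immediate since $S\subseteq\supp_g(x)$ already holds on the ball. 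This closes the chain of equivalences in \eqref{sigmaa}. In the remaining case, either $x\notin X$, so $f_2(x)=+\infty\geq f_2(\bar x)+1$, or $x\in X$ with $\supp_g(x)\supsetneq S$, so $\|g(x)\|_0\geq|S|+1$ and hence $f_2(x)\geq f_2(\bar x)+1$, which is \eqref{sigmab}.

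The argument is essentially a finite-dimensional topology-and-counting exercise, so I do not expect a serious obstacle; the only points that need care are the bookkeeping with the $+\infty$ value of $\delta_X$ when $x\notin X$, and verifying that all three characterizations listed under case (a) are genuinely mutually equivalent (both directions), rather than merely pairwise-implied in one direction.
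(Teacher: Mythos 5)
Your proof is correct and follows the same core idea as the paper's (much terser) argument: continuity of $g$ forces $\supp_g(x)\supseteq\supp_g(\bar{x})$ on a small ball, membership in $C_{\supp_g(\bar{x})}$ gives the reverse inclusion, and any strict growth of the support or exit from $X$ costs at least one unit. The paper leaves the case analysis and the $+\infty$ bookkeeping as ``straightforward''; you have simply filled in those details.
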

\begin{proof}
	Since $\ell_0$ ``norm'' is lsc and $g$ is continuous, one has $\supp_g(x)\supseteq \supp_g(\bar{x})$ if $x\rightarrow \bar{x}$.
	Meanwhile, $x \in C_{\supp_g(\bar{x})}$ means $\supp_g(x)\subseteq \supp_g(\bar{x})$.
	Then, the conclusion is straightforward.
\end{proof}

We then characterize the local minimizers of $f$.
The combinatorial nature of $\ell_0$ "norm" makes minimizing $f$ become minimizing several convex subproblems $\q$.

\begin{theorem}\label{localequvalent}
The set of local minimizers of $f$ reads as
	$$
	L_f  = \bigcup_{\w \subseteq \N} \{ x \in \R^N \,:\, x \text{ solves } \q   \}.
	$$
\end{theorem}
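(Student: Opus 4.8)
The plan is to prove the two inclusions separately, using Lemma~\ref{regu} to localize the problem near a candidate point.

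\medskip

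\noindent\textbf{The inclusion $\supseteq$.} Suppose $\bar x$ solves $\q$ for some $\w\subseteq\N$. Without loss of generality we may take $\w=\supp_g(\bar x)$: indeed, since $\bar x$ is feasible for $\q$ we have $g_i(\bar x)=0$ for all $i\in\w^c$, i.e.\ $\supp_g(\bar x)\subseteq\w$, so shrinking $\w$ to $\supp_g(\bar x)$ only enlarges the constraint set $C_{\w^c}$-wise but keeps $\bar x$ feasible, and a minimizer over a larger feasible set is still a minimizer over the smaller one containing it — I would check this compatibility carefully, as it is the one slightly fiddly point. Now apply Lemma~\ref{regu} at $\bar x$ to obtain the ball $B(\bar x)$. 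For $x\in B(\bar x)$ in case (a), we have $\supp_g(x)=\supp_g(\bar x)$ and $x\in X$, hence $f_2(x)=f_2(\bar x)$ and $x$ is feasible for $\q$, so $f_1(x)\ge f_1(\bar x)$ and therefore $f(x)=f_1(x)+f_2(x)\ge f_1(\bar x)+f_2(\bar x)=f(\bar x)$. For $x\in B(\bar x)$ in case (b), $f_2(x)\ge f_2(\bar x)+1$, and by continuity of $f_1$ we may shrink $B(\bar x)$ so that $|f_1(x)-f_1(\bar x)|<1$, giving $f(x)>f(\bar x)$. Hence $\bar x\in L_f$.

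\medskip

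\noindent\textbf{The inclusion $\subseteq$.} Conversely, let $\bar x\in L_f$, so there is a ball on which $f(x)\ge f(\bar x)$; intersect it with the ball $B(\bar x)$ from Lemma~\ref{regu}. First note $\bar x\in X$ (otherwise $f(\bar x)=+\infty$ and $\bar x$ could not be a local minimizer, since $X\neq\emptyset$ is needed for the statement to be non-vacuous — I would address the degenerate case). Set $\w=\supp_g(\bar x)$. Take any $x$ in the feasible set of $\q$, i.e.\ $x\in X\cap C_{\w}$; for $t\in(0,1]$ consider $x_t=\bar x+t(x-\bar x)$. By convexity and closedness of $X$ and of each $\ker g_i$ (so $C_{\w}=\bigcap_{i\in\w^c}\ker g_i$ is convex), $x_t\in X\cap C_{\w}$ for all such $t$, and $x_t\to\bar x$ as $t\to 0^+$. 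For $t$ small enough, $x_t$ lies in $B(\bar x)\cap(\text{the local-min ball})$; since $x_t\in C_{\w}\cap X$, Lemma~\ref{regu} forces case (a) (case (b) is impossible because $f_2(x_t)\le f_2(\bar x)$ there), so $f_2(x_t)=f_2(\bar x)$, and local minimality gives $f_1(x_t)\ge f_1(\bar x)$. Convexity of $f_1$ then yields $f_1(x)\ge f_1(\bar x)$ for the original $x$ (a standard directional-derivative/line-segment argument: $f_1(\bar x)\le f_1(x_t)\le (1-t)f_1(\bar x)+t f_1(x)$ gives $f_1(\bar x)\le f_1(x)$). Since $x$ was an arbitrary feasible point of $\q$ and $\bar x$ is itself feasible, $\bar x$ solves $\q$, hence lies in the right-hand side.

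\medskip

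\noindent\textbf{Main obstacle.} The routine parts are the continuity/convexity estimates. The step that needs genuine care is the reduction to $\w=\supp_g(\bar x)$ together with showing the line segment $x_t$ stays in $B(\bar x)$ while simultaneously being feasible for $\q$: this is exactly where the convexity and closedness of $X$ and of the $\ker g_i$ is used, and where Lemma~\ref{regu}'s equivalence $f_2(x)=f_2(\bar x)\Leftrightarrow x\in B(\bar x)\cap C_{\supp_g(\bar x)}\cap X$ does the real work of matching the combinatorial $\ell_0$ structure to the convex subproblem $\q$.
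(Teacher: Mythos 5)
Your proposal is correct and follows essentially the same route as the paper: both directions hinge on Lemma~\ref{regu}'s dichotomy, with case (a) handled by feasibility for the convex subproblem and case (b) by continuity of $f_1$ against the $+1$ gap, and the converse reduced to $(\mathrm{Q}_{\supp_g(\bar x)})$ where local minimality upgrades to global by convexity. Your WLOG reduction to $\w=\supp_g(\bar x)$ and the explicit line-segment argument are just spelled-out versions of steps the paper handles implicitly (the inclusion $C_{\supp_g(\bar x)}\subseteq C_{\w}$ and the fact that a local minimizer of a convex problem is global).
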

\begin{proof}
Firstly, we show that
for any given $\w \subseteq \N$,
if $\bar{x}$ solves $\q$,
then $\bar{x}$ is a local minimizer of $f$.

According to Lemma \ref{regu},
we divide the neighborhood $B(\bar{x})$ of $\bar{x}$  into two disjoint subsets: $B(\bar{x}) = B_1 \cup B_1^c$ where $B_1 =  B(\bar{x}) \cap C_{\supp_g(\bar{x})} \cap X$.

		Take an arbitrary $x \in B_1$.
		Since $x \in C_{\supp_g(\bar{x})} \cap X$, $x$ is a feasible point of $\q$.
		As $\bar{x}$ solves $\q$, we have $f_1(\bar{x})  \leq f_1(x) $.
		Moreover, applying \eqref{sigmaa} gives $f_2(\bar{x}) = f_2(x)$.
		Hence, $\forall \, x \in B_1$,
		$		f(\bar{x}) = f_1(\bar{x}) + f_2(\bar{x})  \leq f(x).$
		
		Take an arbitrary $x \in B_1^c$.
		We have $f_2(x)\geq f_2(\bar{x})+1$ according to \eqref{sigmab}. Since $f_1(x)$ is continuous, there must exist a neighborhood $\mathcal{O}(\bar{x})$ of $\bar{x}$ such that $\forall\, x \in \mathcal{O}(\bar{x}), f_1(x) \geq f_1(\bar{x})-1$.
		Hence, $\forall\, x \in B_1^c \cap \mathcal{O}(\bar{x})$, we have $f(\bar{x}) \leq f(x)$ as well.
		
		Consequently, $\forall\, x \in  B(\bar{x}) \cap \mathcal{O}(\bar{x})$, we have $f(\bar{x}) \leq f(x)$, which means $\bar{x}$ is a local minimizer of $f$.

\vspace{0.5cm}

Secondly, we show that if $\bar{x}$ is a local minimizer of $f$, then $\bar{x}$ solves $(\mathrm{Q}_{\bar{\omega}}) $ with $\bar{\omega} := \supp_g( \bar{x})$.

		Since $\bar{x}$ is a local minimizer of $f$, $\bar{x}$ is also a local minimizer of the following constrained problem:
		\begin{equation}\label{pf}
		\min_{x \in X} f(x)\quad   \text{s.t. \quad} g_i(x) = 0, \forall\, i \in \bar{\omega}^c.
		\end{equation}
		The feasible domain of \eqref{pf} is $X \cap C_{\bar{\omega}}$.
		Thus, there exists a neighborhood $\mathcal{O}(\bar{x})$ of $\bar{x}$ such that $\forall\, x \in X \cap C_{\bar{\omega}} \cap \mathcal{O}(\bar{x}) $, $f(x) \geq f(\bar{x})$.

		According to Lemma \ref{regu},
		$	\forall\, x \in X \cap  C_{\bar{\omega}} \cap B(\bar{x}),  f_2(x) = f_2(\bar{x}).	$
		Thus,
		$$
		\forall\, x \in \mathcal{O}(\bar{x})  \cap X \cap C_{\bar{\omega}}  \cap B(\bar{x}), \qquad f_1(x)  \geq f_1(\bar{x}) .
		$$
		It follows that $\bar{x}$ is a local minimizer of $(\mathrm{Q}_{\bar{\w}})$, whose feasible domain is also $X \cap C_{\bar{\omega}}$. 	
		Since $(\mathrm{Q}_{\bar{\w}})$ is a convex problem, we have $\bar{x}$ solves $(\mathrm{Q}_{\bar{\w}})$.

\end{proof}

Clearly, there are at most $2^N$ different subproblems $\q$.
Thus, by enumerating all $\w$, we can find all of the local minimizers of $f$.
Next, we will see that this result also helps to give the subdifferential of $f_2$ which is the key	 to discuss the critical points of $f$.

\begin{lemma} \label{f2}
The function $f_2$ is regular. In particular, given $x \in \R^N$ with $\w := \supp_g(x)$, one has
$$
\partial f_2(x) = \partial \delta_{X \cap C_{\w}} (x).
$$
\end{lemma}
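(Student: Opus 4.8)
The plan is to read off the local structure of $f_2$ from Lemma \ref{regu} and thereby reduce the computation of $\partial f_2(x)$ to that of the normal cone of the convex closed set $X\cap C_{\w}$. We may assume $x\in X$, since otherwise $f_2(x)=+\infty$ and both $\partial f_2(x)$ and $\partial\delta_{X\cap C_{\w}}(x)$ are empty. So fix $x\in X$, set $\w:=\supp_g(x)$, and take the ball $B(x)$ supplied by Lemma \ref{regu}: on $B(x)$ the function $f_2$ equals the constant $f_2(x)$ on $B(x)\cap C_{\w}\cap X$ and is $\geq f_2(x)+1$ off that set.

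First I would compute the regular (Fr\'echet) subdifferential $\widehat{\partial}f_2(x)$ straight from the definition. Given a vector $v$, split an arbitrary sequence $y\to x$ into two regimes. If $y\in B(x)$ but $y\notin C_{\w}\cap X$, then $f_2(y)-f_2(x)\geq 1$ while $\langle v,y-x\rangle\to 0$, so the difference quotient $\big(f_2(y)-f_2(x)-\langle v,y-x\rangle\big)/\|y-x\|$ tends to $+\infty$ and never obstructs the required $\liminf\geq 0$. If $y\in C_{\w}\cap X$, then $f_2(y)=f_2(x)$ and the quotient is just $-\langle v,y-x\rangle/\|y-x\|$. Hence $v\in\widehat{\partial}f_2(x)$ exactly when $\limsup_{y\to x,\,y\in C_{\w}\cap X}\langle v,y-x\rangle/\|y-x\|\leq 0$, i.e. when $v$ lies in the regular normal cone $\widehat{N}_{X\cap C_{\w}}(x)$; in the degenerate case $B(x)\cap C_{\w}\cap X=\{x\}$ both $\widehat{\partial}f_2(x)$ and that cone equal $\R^N$, so the identity persists. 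Since $X\cap C_{\w}$ is convex and closed, $\widehat{N}_{X\cap C_{\w}}(x)=N_{X\cap C_{\w}}(x)=\partial\delta_{X\cap C_{\w}}(x)$.

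It remains to pass to the limiting subdifferential and so establish regularity, i.e. $\partial f_2(x)=\widehat{\partial}f_2(x)$. The same computation run at a point $y\in C_{\w}\cap X$ near $x$ gives $\widehat{\partial}f_2(y)=\widehat{N}_{X\cap C_{\w}}(y)$; here one invokes \eqref{sigmaa} to see that $\supp_g(y)=\w$ for $y\in C_{\w}\cap X$ close to $x$, so that the set produced by Lemma \ref{regu} applied at $y$ is again $C_{\w}$. Moreover, any $f_2$-attentive sequence $y\to x$ (that is, $y\to x$ with $f_2(y)\to f_2(x)$) eventually lies in $C_{\w}\cap X$, since case (b) of Lemma \ref{regu} would otherwise force $f_2(y)\geq f_2(x)+1$. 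Therefore
\[
\partial f_2(x)=\limsup_{\substack{y\to x\\ f_2(y)\to f_2(x)}}\widehat{\partial}f_2(y)=\limsup_{\substack{y\to x\\ y\in X\cap C_{\w}}}\widehat{N}_{X\cap C_{\w}}(y)=N_{X\cap C_{\w}}(x),
\]
and $N_{X\cap C_{\w}}(x)=\widehat{N}_{X\cap C_{\w}}(x)=\widehat{\partial}f_2(x)$ because $X\cap C_{\w}$ is convex and closed. Together with the lower semicontinuity of $f_2$ from Lemma \ref{regu}, this is precisely subdifferential regularity of $f_2$ at $x$, with $\partial f_2(x)=\partial\delta_{X\cap C_{\w}}(x)$.

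The step I expect to be the main obstacle is making the regular-subdifferential computation airtight: carefully justifying the split of difference-quotient sequences, dealing with the degenerate case where $X\cap C_{\w}$ meets $B(x)$ only at $x$, and — for the regularity claim — verifying that $f_2$-attentive convergence $y\to x$ only samples points of $X\cap C_{\w}$, so that the outer limit of the regular subdifferentials is exactly the normal cone of that set. The rest is bookkeeping with Lemma \ref{regu} and standard facts about convex sets and indicator functions.
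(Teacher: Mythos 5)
Your computation of the subdifferential identity $\partial f_2(x)=\partial\delta_{X\cap C_{\w}}(x)$ follows essentially the same route as the paper: use Lemma \ref{regu} to reduce the difference quotients in $\hat{\partial}f_2(x)$ to sequences in $X\cap C_{\w}$, identify the result with the regular normal cone, then observe that $f_2$-attentive sequences eventually lie in $X\cap C_{\w}$ so that the limiting subdifferential is the outer limit of $\hat{N}_{X\cap C_{\w}}(y)$, which equals $N_{X\cap C_{\w}}(x)$ by convexity and closedness. That part is sound and matches the paper's argument step for step.

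There is, however, a genuine gap in your proof of the claim that $f_2$ is \emph{regular}. You assert that lower semicontinuity together with $\partial f_2(x)=\hat{\partial}f_2(x)$ ``is precisely subdifferential regularity,'' but that is not the characterization: by Corollary 8.11 of \cite{Rockafellar1998Variational}, a finite, locally lsc function is regular at $x$ if and only if \emph{both} $\partial f(x)=\hat{\partial}f(x)$ \emph{and} $\partial^{\infty}f(x)=\hat{\partial}f(x)^{\infty}$ hold; the horizon condition is an independent requirement (e.g.\ $f(t)=-|t|^{1/2}$ at $t=0$ satisfies $\partial f(0)=\hat{\partial}f(0)=\emptyset$ yet is not regular there, precisely because the horizon condition fails). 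The paper therefore devotes a separate computation to $\partial^{\infty}f_2(x)$, showing via the same attentive-localization argument that $\partial^{\infty}f_2(x)=\partial^{\infty}\delta_{X\cap C_{\w}}(x)=\partial\delta_{X\cap C_{\w}}(x)^{\infty}=\partial f_2(x)^{\infty}$, using regularity of the convex indicator. Your argument omits this entirely. The gap is easily filled -- the horizon subdifferential reduces to that of $\delta_{X\cap C_{\w}}$ by exactly the localization you already carried out -- but as written the regularity assertion is not justified.
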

\begin{proof}
By the definition, the regular subdifferential of $f_2$ at $x$ is
  $$
\begin{aligned}
	\hat{\partial} f_2(x)  &=\{ x^* \in \R^N \,:\,  \liminf_{\underset{y \neq x}{y \rightarrow x} } \frac{1}{\| y-x \|}[ f_2(y) - f_2(x) - \langle x^*, y-x \rangle   ] \geq 0 \} \\
[\,\text{ Lemma } \ref{regu}\,] & = \{ x^* \in \R^N \,:\, \liminf_{\underset{y \neq x}{y \rightarrow x, f_2(y) = f_2(x) } } \frac{1}{\| y-x \|}[  - \langle x^*, y-x \rangle   ] \geq 0\} \\
[\, \eqref{sigmaa} \,]  & =  \{ x^* \in \R^N \,:\, \liminf_{\underset{y \neq x}{y \rightarrow x, y \in X \cap C_{\w} } } \frac{1}{\| y-x \|}[  - \langle x^*, y-x \rangle   ] \geq 0  \}\\
 & = \{ x^* \in \R^N \,:\,  \liminf_{\underset{y \neq x}{y \rightarrow x, } } \frac{1}{\| y-x \|}[ \delta_{X \cap C_{\w}}(y) -\delta_{X \cap C_{\w} }(x) - \langle x^*, y-x \rangle   ] \geq 0 \} \\
 & =  \hat{\partial} \delta_{X \cap C_{\w}} (x).
\end{aligned}
$$
The subdifferential of $f_2$ at $x$ is
  $$
\begin{aligned}
\partial f_2(x)
& =  \{ x^* \in \R^N \,:\, \exists\, y \rightarrow x, f_2(y) \rightarrow f_2(x), \hat{\partial}f_2(y) \ni y^*  \rightarrow x^* \}\\
[\,\text{ Lemma } \ref{regu}\,]
& =  \{ x^* \in \R^N \,:\, \exists\, y \rightarrow x, f_2(y) = f_2(x), \hat{\partial}f_2(y) \ni y^*  \rightarrow x^* \}\\
[\, \eqref{sigmaa} \,]
& =  \{ x^* \in \R^N \,:\, \exists\, y \rightarrow x, y \in X \cap C_{\w}, \hat{\partial}\delta_{X \cap C_{\w}}(y) \ni y^*  \rightarrow x^* \}\\
& =  \{ x^* \in \R^N \,:\, \exists\, y \rightarrow x, \delta_{X \cap C_{\w}}(y) \rightarrow \delta_{X \cap C_{\w}}(x), \hat{\partial} \delta_{X \cap C_{\w}}(y) \ni y^*  \rightarrow x^* \}\\
&= \partial \delta_{X \cap C_{\w}}(x).
\end{aligned}
$$
Since $X \cap C_{\w}$ is convex, we have $\partial \delta_{X \cap C_{\w}}(x) = \hat{\partial }\delta_{X \cap C_{\w}}(x)$(Proposition 8.12, p308, \cite{Rockafellar1998Variational}).
Thus,
one has $\partial f_2(x) = \hat{\partial } f_2(x)$.

For the horizon subdifferential of $f_2$ at $x$,
$$
\begin{aligned}
 \partial^{\infty} f_2(x)
 &= \{ x^* \in \R^N \,:\, \exists\, y \rightarrow x, f_2(y) \rightarrow f_2(x) , y^* \in \hat{\partial}f_2(y),  \lambda_n \searrow 0,    \lambda_n y^*  \rightarrow x^* \}\\
  &= \{ x^* \in \R^N \,:\, \exists\, y \rightarrow x, y\in X \cap C_{\w}, y^* \in \hat{\partial}\delta_{X \cap C_{\w}}(y),  \lambda_n \searrow 0,   \lambda_n y^*  \rightarrow x^* \}\\
 & =  \partial^{\infty} \delta_{X \cap C_{\w}}(x) \\
 & = \partial \delta_{X \cap C_{\w}}(x) ^{\infty} \\
 & = \partial f_2(x) ^{\infty}.
\end{aligned}
$$
The penultimate equation is due to the fact that $\delta_{X \cap C_{\w}}(x)$ is regular(Exercise 8.14, p310, \cite{Rockafellar1998Variational}).

Finally, as $f_2$ is lsc, we have $f_2$ is regular(Corollary 8.11, p307, \cite{Rockafellar1998Variational}).

\end{proof}

Since $f_1$ is not necessarily differential, the addition rule of subdifferential is hard to obtain.
However, the regularity of $\ell_0$ "norm" guarantees this conclusion here.

\begin{lemma} \label{add}
For any $x \in \mathsf{dom} f$,
$$
\partial f(x) = \partial f_1(x)     + \partial f_2(x).
$$
\end{lemma}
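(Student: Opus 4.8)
The plan is to apply the general sum rule for limiting subdifferentials to the splitting $f = f_1 + f_2$ and to check its hypotheses at $x$. Recall the form of that rule (Corollary 10.9, \cite{Rockafellar1998Variational}): if $f = f_1 + f_2$ with $f_1$, $f_2$ proper and lsc, both regular at $x$, and whenever $y_1^* \in \partial^{\infty} f_1(x)$ and $y_2^* \in \partial^{\infty} f_2(x)$ satisfy $y_1^* + y_2^* = 0$ one necessarily has $y_1^* = y_2^* = 0$, then $f$ is regular at $x$ and $\partial f(x) = \partial f_1(x) + \partial f_2(x)$. Since subdifferentials are local objects and $x \in \mathsf{dom} f \subseteq \mathsf{dom} f_1$ with $\mathsf{dom} f_1$ open, it suffices to perform this verification on a neighborhood of $x$ contained in $\mathsf{dom} f_1$, on which $f_1$ is finite, continuous and convex.

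The main point is to dispose of $f_1$. On the open set $\mathsf{dom} f_1$ the function $f_1$ is finite, continuous and convex, hence locally Lipschitz there; consequently $f_1$ is regular at $x$ and its horizon subdifferential is trivial, $\partial^{\infty} f_1(x) = \{0\}$. This is the one place where the standing assumption ``$\mathsf{dom} f_1$ is open'' is used, and it is what makes the constraint qualification automatic. For $f_2$, Lemma \ref{f2} already tells us that $f_2$ is lsc and regular at $x$, and its proof computes $\partial^{\infty} f_2(x) = \partial \delta_{X \cap C_{\w}}(x)^{\infty}$ with $\w := \supp_g(x)$ (a closed convex cone, but its precise form is irrelevant here). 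Moreover $f_1(x)$ and $f_2(x)$ are both finite, since $x \in \mathsf{dom} f_1$, $x \in X$ and $\| g(x) \|_0 \le M$.

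Now the qualification condition is immediate: any $y_1^* \in \partial^{\infty} f_1(x) = \{0\}$ must equal $0$, so $y_1^* + y_2^* = 0$ forces $y_2^* = 0$ as well. Therefore Corollary 10.9 of \cite{Rockafellar1998Variational} applies and gives $\partial f(x) = \partial f_1(x) + \partial f_2(x)$ (and, as a by-product, that $f$ is regular at $x$, consistent with Lemmas \ref{regu} and \ref{f2}). I do not foresee a real obstacle: once Lemma \ref{f2} has established the regularity of $f_2$, the only slightly delicate ingredient is the identity $\partial^{\infty} f_1(x) = \{0\}$, which rests on the openness of $\mathsf{dom} f_1$ together with the local Lipschitz continuity of finite convex functions on open sets; everything else is routine bookkeeping within the sum rule.
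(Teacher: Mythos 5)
Your proof is correct and follows essentially the same route as the paper: both verify the hypotheses of the sum rule (Corollary 10.9 of \cite{Rockafellar1998Variational}) by establishing regularity of $f_1$ and $\partial^{\infty} f_1(x) = \{0\}$ from the openness of $\mathsf{dom} f_1$, invoking Lemma \ref{f2} for the regularity of $f_2$. The only cosmetic difference is that the paper derives $\partial^{\infty} f_1(x)=\{0\}$ via Proposition 8.12 (the normal cone to the open set $\mathsf{dom} f_1$), whereas you cite local Lipschitz continuity of finite convex functions; these are equivalent justifications of the same fact.
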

\begin{proof}
Since $f_1$ is convex and continuous with $\mathsf{dom} f_1$ open, according to Proposition 8.12 in \cite{Rockafellar1998Variational}, one has
$$
\partial f_1 (x)  = \hat{\partial} f_1(x),
\qquad
\partial^{\infty} f_1(x) = \{ x^* | 0 \geq \langle x^*, y- x \rangle \text{ for all } x \in \mathsf{dom} f_1\} = \{ 0 \}.
$$
Meanwhile, by Corollary 8.10 in \cite{Rockafellar1998Variational}, one has $\partial f_1(x) \neq \emptyset$.
Then, applying Proposition 8.12 again gives $\partial^{\infty} f_1(x) = \partial f_1(x)^{\infty} $.
Thus $f_1$ is regular at $x$.

Since $\partial^{\infty} f_1(x)  = \{ 0\}$,
the only combination of vectors $x^*_i \in \partial ^{\infty} f_i(x)$ with $x^*_1  + x^*_2 = 0$ is $x^*_1 = x^*_2 =0$.
Finally, as $f_2$ is regular at $x$, by Corollary 10.9 in \cite{Rockafellar1998Variational},
one has
$ \partial f(x)  =\partial f_1(x)  + \partial f_2(x) .$

\end{proof}

Now we are ready to prove Theorem \ref{crlo}.
\begin{proof}
Denote $\bar{\w} = \supp_g(\bar{x})$.
According to Lemma \ref{f2},
$0 \in \partial f(\bar{x}) = \partial f_1(\bar{x})  + \partial f_2(\bar{x}) =  \partial f_1(\bar{x})  +  \partial \delta_{ X \cap C_{\bar{\w}}} (\bar{x})$.
Similar to the proof of Lemma \ref{add}, we can obtain
$\partial [f_1(\bar{x}) + \delta_{X \cap  C_{\bar{\w}}}] (\bar{x}) = \partial f_1(\bar{x})  +  \partial \delta_{X \cap  C_{\bar{\w}}} (\bar{x}) \ni 0$.
Thus, $\bar{x}$ is a critical point of $(\mathrm{Q}_{\bar{\w}}) $. As $(\mathrm{Q}_{\bar{\w}}) $  is convex,
$\bar{x}$  solves $(\mathrm{Q}_{\bar{\w}}) $.
Finally, applying Theorem \ref{localequvalent} yields the result.
\end{proof}

\emph{Remark.}
A similar result to show that a critical point of $\ell_0$ minimization problem is a local minimizer has been given in Lemma 3.4 of \cite{bao2016image} and our previous work, Theorem 3.6 of \cite{Feng2018}.
However, the conclusion in \cite{bao2016image} only applies to a simple form of $f$, i.e., $f(x) = \sum_i \lambda_i |x_i|_0 + \phi(x)$ where $\lambda$ is a given vector, $\phi$ is convex and \emph{$C^1$};
in \cite{Feng2018}, the authors considered the special case with $g=\nabla$
and differentiable $f_1(x) = \| Ax - b \|^2$.
Both models in \cite{bao2016image}\cite{Feng2018} are convenient for subdifferential calculus. Our result here is not a trivial generalization of theirs.
\emph{Remark.}
Nonconvex optimization, especially $\ell_0$ minimization, is still a very active research topic now, which suggests that new improvements will be made in the coming years.
As long as their convergence to a critical point is given, we can claim that it is also a local minimizer.
This property may also help to design an efficient framework to reach its global minimizer in the future.

\section{The critical point of rank minimization}
\label{rankr}
The rank of a matrix is often used to measure the order, complexity, the dimension of a model, etc\cite{mfezal,fazel2004rank,liu2010robust}.
The rank function and the $\ell_0$ ``norm''  are both l.s.c. and piecewise-constant valued,
so rank minimization is usually considered as a natural extension of $\ell_0$ minimization.
Some results in the previous section apply to the rank minimization,
but, surprisingly, a critical point here is not necessarily a local minimizer.

We consider the following general model:
\begin{equation}
\label{model2}
 \min_{A \in \R^{M \times N}} F(A):= F_1(A) + \mathsf{rank}\, G(A) + \delta_{\mathcal{X}}(A),
\end{equation}
where $\mathsf{rank}\, G(A)$ is the rank of $G(A)$, and
\begin{itemize}
\item  $F_1$ is smooth with $\mathsf{dom} F_1$ open;
\item $G$ is a continuous map from $\R^{M \times N}$ to $\R^{M'  \times N'}$;
\item $\mathcal{X}\subseteq \R^{M \times N}$ is convex and closed.
\end{itemize}
The set of local minimizers of $F$ is denoted as $L_F$:
$$
L_F := \{ A \in \R^{M \times N} \,:\, A \text{ is a local minimizer of } F \}.
$$

Denote $\N' = \{ 1,2,\dots, \min(M',N') \}$.
Similarly,
for any given $r \in \N'$,
we define problem $\qa$ as follows
\begin{equation}
\label{qa}
\qa
\qquad \qquad	
\left\{
\begin{aligned}
\min_{A \in \mathcal{X}} \;\; & F_1(A) ,\\
\st\;\;\;  &\mathsf{rank}\, G(A) = r.
\end{aligned}
\right.	
\end{equation}
We also denote
\begin{equation}
\label{cr}
\mathcal{C}_{r} \,:=\, \{A \in \mathbf{R}^{M \times N} \;:\; \mathsf{rank}\, G(A) = r\},
\end{equation}
which is a closed smooth manifold.
The feasible domain of $\qa$ is $\mathcal{X} \cap \mathcal{C}_{r}$.
More details about the critical point of $\qa$ can be seen in \cite{helmke1995critical}.

The following lemma is from \cite{hossei}.
\begin{lemma} \label{space}
	(\,Theorem 2.1, \cite{hossei}\,)
Let $A$ have rank $r$, column space $\mathcal{U}$, and row space $\mathcal{V}$. Then,
$$
\partial \delta_{\mathcal{C}_{r} }(A) = \mathcal{U} ^{\bot} \otimes \mathcal{V}^{\bot}.
$$
\end{lemma}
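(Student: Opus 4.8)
\emph{Proof proposal.}
The plan is to reduce the computation of $\partial \delta_{\mathcal{C}_r}(A)$ to a purely linear-algebraic computation of a tangent space, exploiting that the matrices of rank exactly $r$ form a smooth embedded manifold around $A$. First I would record that, although $\mathcal{C}_r = \{ B \in \R^{M\times N} : \mathsf{rank}\, B = r\}$ is not closed globally, the rank function is lower semicontinuous, so $\mathsf{rank}\, B \geq r$ for all $B$ in some neighborhood $\mathcal{O}$ of $A$; hence $\mathcal{C}_r \cap \mathcal{O}$ equals $\{ B \in \mathcal{O} : \mathsf{rank}\, B \leq r \}$, is relatively closed in $\mathcal{O}$, and is a $C^{\infty}$ embedded submanifold of dimension $r(M+N-r)$ (a classical fact; see, e.g., \cite{helmke1995critical}). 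Since a smooth manifold is regular as a set in the sense of \cite{Rockafellar1998Variational} and its limiting normal cone at a point coincides with the orthogonal complement of its tangent space (see, e.g., Example 6.8 in \cite{Rockafellar1998Variational}), and since $\partial \delta_S(A) = N_S(A)$ for every set $S$ that is locally closed at $A$, it suffices to prove
$$
\big( T_A \mathcal{C}_r \big)^{\bot} \;=\; \mathcal{U}^{\bot} \otimes \mathcal{V}^{\bot},
$$
the orthogonal complement being taken for the Frobenius inner product $\langle X, Y \rangle = \mathsf{tr}(X^{\top} Y)$.

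Second, I would compute $T_A \mathcal{C}_r$ through a Schur-complement chart. Fix a thin SVD $A = U \Sigma V^{\top}$ with $\Sigma \in \R^{r\times r}$ invertible, $\mathcal{U} = \mathsf{range}\, U$, $\mathcal{V} = \mathsf{range}\, V$, and extend $U, V$ to orthonormal bases $[\,U\; U_{\bot}\,]$ of $\R^M$ and $[\,V\; V_{\bot}\,]$ of $\R^N$. In these bases $A$ becomes the block matrix $\mathrm{diag}(\Sigma, 0)$, and a nearby matrix $A + Z$, written in conformal $2\times 2$ block form with blocks $Z_{11}, Z_{12}, Z_{21}, Z_{22}$, has the top-left block $\Sigma + Z_{11}$ invertible. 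Hence $\mathsf{rank}(A+Z) = r$ precisely when the Schur complement vanishes,
$$
Z_{22} \;=\; Z_{21}\,(\Sigma + Z_{11})^{-1}\,Z_{12},
$$
so locally $\mathcal{C}_r$ is the graph of the smooth map $(Z_{11}, Z_{12}, Z_{21}) \mapsto Z_{21}(\Sigma + Z_{11})^{-1} Z_{12}$. This map is quadratic in $(Z_{12}, Z_{21})$, hence has vanishing derivative at the origin, and therefore $T_A \mathcal{C}_r = \{ Z : Z_{22} = 0 \}$. Translating back via $Z_{22} = U_{\bot}^{\top} Z V_{\bot}$, this reads $T_A \mathcal{C}_r = \{ Z \in \R^{M\times N} : P_{\mathcal{U}^{\bot}} Z P_{\mathcal{V}^{\bot}} = 0 \}$, where $P_{\mathcal{W}}$ is the orthogonal projector onto $\mathcal{W}$; a dimension count, $MN - (M-r)(N-r) = r(M+N-r)$, confirms this is the full tangent space.

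Third, I would take the orthogonal complement. The four blocks are mutually orthogonal for the Frobenius inner product, so the orthogonal complement of $\{ Z : Z_{22} = 0 \}$ consists of the matrices supported only in the $(2,2)$ block, that is, $\{ Z : Z = P_{\mathcal{U}^{\bot}} Z P_{\mathcal{V}^{\bot}} \}$. A matrix satisfies this iff its column space lies in $\mathcal{U}^{\bot}$ and its row space lies in $\mathcal{V}^{\bot}$, i.e. iff it lies in $\mathsf{span}\{ u v^{\top} : u \in \mathcal{U}^{\bot},\, v \in \mathcal{V}^{\bot} \} = \mathcal{U}^{\bot} \otimes \mathcal{V}^{\bot}$. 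Combining the three steps, $\partial \delta_{\mathcal{C}_r}(A) = N_{\mathcal{C}_r}(A) = (T_A \mathcal{C}_r)^{\bot} = \mathcal{U}^{\bot} \otimes \mathcal{V}^{\bot}$.

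I expect the main obstacle to be the variational-analysis bookkeeping rather than the linear algebra: one must be careful that $\mathcal{C}_r$ is not closed, so the normal cone is only a local object, and one must correctly invoke the regularity of a $C^{\infty}$ manifold as a set — equivalently, the coincidence of its regular and limiting normal cones, both equal to $(T_A \mathcal{C}_r)^{\bot}$ — to justify identifying $\partial \delta_{\mathcal{C}_r}(A)$ with $N_{\mathcal{C}_r}(A)$. The other point needing care is to verify that the Schur-complement description genuinely yields a submanifold chart (a smooth diffeomorphism onto a relatively open piece of $\mathcal{C}_r$) and not merely a pointwise constraint; once that is secured, the differentiation of the graph map and the orthogonal-complement computation are routine.
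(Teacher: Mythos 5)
The paper offers no proof of this lemma: it is imported verbatim as Theorem 2.1 of \cite{hossei}, so the only comparison available is between your argument and a citation. Your self-contained derivation is correct and is the standard one for the normal space of the fixed-rank manifold: identify $\partial\delta_{\mathcal{C}_r}(A)$ with the limiting normal cone $N_{\mathcal{C}_r}(A)$, use local closedness and $C^{\infty}$-smoothness of $\mathcal{C}_r$ (Example 6.8 of \cite{Rockafellar1998Variational}) to equate that cone with $(T_A\mathcal{C}_r)^{\bot}$, compute $T_A\mathcal{C}_r=\{Z: U_{\bot}^{\top}ZV_{\bot}=0\}$ from the Schur-complement graph chart (the derivative of $(Z_{11},Z_{12},Z_{21})\mapsto Z_{21}(\Sigma+Z_{11})^{-1}Z_{12}$ at the origin indeed vanishes, and the dimension count $MN-(M-r)(N-r)=r(M+N-r)$ checks out), and take Frobenius-orthogonal complements blockwise to land on $\mathcal{U}^{\bot}\otimes\mathcal{V}^{\bot}$ in the sense of $\mathrm{span}\{uv^{\top}: u\in\mathcal{U}^{\bot}, v\in\mathcal{V}^{\bot}\}$, which matches how the lemma is used in Example \ref{noA}. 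What your route buys, beyond making the lemma self-contained, is a correction of an imprecision in the surrounding text: the paper asserts that $\mathcal{C}_r$ is a \emph{closed} smooth manifold, which is false for $0<r<\min(M',N')$, and your observation that lower semicontinuity of the rank makes $\mathcal{C}_r$ locally closed at $A$ is exactly the hypothesis needed for the normal-cone identities to apply. Two cosmetic points: you tacitly specialize the paper's $\mathcal{C}_r=\{A:\mathsf{rank}\,G(A)=r\}$ to the case $G=\mathrm{id}$, which is the only reading under which the lemma's hypotheses make sense (and the only case in which the paper uses it); and the reference to \cite{helmke1995critical} for the manifold structure could equally be replaced by the chart you construct, which already proves it.
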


\begin{proposition} \label{adda}
	Given $A \in \mathsf{dom} F$ with $r = \mathsf{rank}\, G(A)$, one has
	$$
	\partial F(A)  = \partial (\, F_1 +  \delta_{\mathcal{X} \cap \mathcal{C}_{r}}\,)(A).
	$$
	Moreover, $\{A \in \mathbf{R}^{M \times N} :  0 \in \partial F(A)\}=\{A \in \mathbf{R}^{M \times N}  : A \text{ is a critical point of } \qa, r \in \N' \}$.
\end{proposition}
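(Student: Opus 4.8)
\emph{Proof plan.}
The plan is to follow the development of Section~\ref{l0r} step by step, with $\mathsf{rank}\,G$ playing the role of $\|g(\cdot)\|_0$ and the smoothness of $F_1$ taking over the job that convexity of $f_1$ did there. Write $F_2:=\mathsf{rank}\,G(\cdot)+\delta_{\mathcal{X}}$. The first step is to prove the rank analogue of Lemma~\ref{f2}, namely that for every $A\in\mathsf{dom}\,F$, with $r=\mathsf{rank}\,G(A)$,
\[
\partial F_2(A)=\partial\delta_{\mathcal{X}\cap\mathcal{C}_{r}}(A).
\]
To this end I would first record the rank analogue of Lemma~\ref{regu}: since $\mathsf{rank}\,G$ is lower semicontinuous (rank is l.s.c., $G$ is continuous) and integer-valued, there is an open ball $B(A)$ on which $\mathsf{rank}\,G(\cdot)\ge r$, so that every $A'\in B(A)$ falls into exactly one of two cases --- either $F_2(A')=F_2(A)$, which holds iff $A'\in\mathcal{X}$ and $\mathsf{rank}\,G(A')=r$, i.e. iff $A'\in B(A)\cap\mathcal{X}\cap\mathcal{C}_{r}$ (using \eqref{cr}), or else $F_2(A')\ge F_2(A)+1$. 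With this dichotomy the computation in the proof of Lemma~\ref{f2} transcribes verbatim: points of the second kind send the relevant difference quotients to $+\infty$ and are therefore invisible to both the regular and the limiting subdifferential, while on the first kind $F_2$ coincides with $\delta_{\mathcal{X}\cap\mathcal{C}_{r}}$. This gives $\hat{\partial}F_2(A)=\hat{\partial}\delta_{\mathcal{X}\cap\mathcal{C}_{r}}(A)$, and then, applying the same dichotomy afresh at each perturbed point $A_k\to A$ with $F_2(A_k)=F_2(A)=r$ appearing in the definition of $\partial F_2(A)$, one gets $\partial F_2(A)=\partial\delta_{\mathcal{X}\cap\mathcal{C}_{r}}(A)$. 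Note that, unlike in Section~\ref{l0r}, no convexity of $\mathcal{C}_{r}$ is used (nor available).

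The second step is the sum rule. Since $F_1$ is smooth and $\mathsf{dom}\,F_1$ is open, $F_1$ is continuously differentiable on a neighbourhood of any $A\in\mathsf{dom}\,F$, so the elementary sum rule for a smooth plus a lower semicontinuous function (Exercise~8.8 in \cite{Rockafellar1998Variational}) applies \emph{with no constraint qualification}:
\[
\partial F(A)=\partial(F_1+F_2)(A)=\nabla F_1(A)+\partial F_2(A),\qquad
\partial(F_1+\delta_{\mathcal{X}\cap\mathcal{C}_{r}})(A)=\nabla F_1(A)+\partial\delta_{\mathcal{X}\cap\mathcal{C}_{r}}(A).
\]
Combining these two identities with the first step yields $\partial F(A)=\partial(F_1+\delta_{\mathcal{X}\cap\mathcal{C}_{r}})(A)$, the first assertion. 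This is precisely where smoothness of $F_1$ replaces convexity: $\mathcal{C}_{r}$ is a nonconvex manifold, $\delta_{\mathcal{X}\cap\mathcal{C}_{r}}$ need not be regular, and the horizon-subdifferential argument of Lemma~\ref{add} (via Corollary~10.9 of \cite{Rockafellar1998Variational}) is unavailable --- but it is also unnecessary, because the smooth-plus-l.s.c. sum rule is unconditional.

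For the second assertion I would use that $\qa$ is equivalent to $\min_{A}F_1(A)+\delta_{\mathcal{X}\cap\mathcal{C}_{r}}(A)$, so that ``$A$ is a critical point of $\qa$'' means exactly $A\in\mathcal{X}\cap\mathcal{C}_{r}$ (feasibility, i.e. $\mathsf{rank}\,G(A)=r$) together with $0\in\partial(F_1+\delta_{\mathcal{X}\cap\mathcal{C}_{r}})(A)$. If $0\in\partial F(A)$, then $A\in\mathsf{dom}\,F\subseteq\mathcal{X}$, the value $r:=\mathsf{rank}\,G(A)$ lies in $\N'$, and the first assertion gives $0\in\partial(F_1+\delta_{\mathcal{X}\cap\mathcal{C}_{r}})(A)$, so $A$ is a critical point of $\qa$ for this $r$. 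Conversely, if $A$ is a critical point of $\qa$ for some $r\in\N'$, then $A\in\mathcal{X}\cap\mathcal{C}_{r}$, hence $\mathsf{rank}\,G(A)=r$, and the first assertion applied with exactly this value of $r$ turns $0\in\partial(F_1+\delta_{\mathcal{X}\cap\mathcal{C}_{r}})(A)$ into $0\in\partial F(A)$. The two inclusions give the claimed set equality.

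The main obstacle is the first step. Although it is formally a copy of Lemmas~\ref{regu} and \ref{f2}, two things need care: the defining level set of $\mathcal{C}_{r}$ must be separated from the higher-rank strata by a fresh use of lower semicontinuity of $\mathsf{rank}\,G$ \emph{at every perturbed point} $A_k$ entering the limiting subdifferential, not merely at $A$; and one must avoid shortcuts that invoke regularity of $\delta_{\mathcal{X}\cap\mathcal{C}_{r}}$, which genuinely fails since $\mathcal{C}_{r}$ is a nonconvex smooth manifold. Once these are handled, the remainder --- the sum rule and the bookkeeping for the second assertion --- is routine, and indeed cleaner than in Section~\ref{l0r} because $F_1$ is smooth rather than just convex.
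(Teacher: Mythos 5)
Your proposal is correct and follows essentially the same route as the paper: the smooth-plus-l.s.c. sum rule (Exercise 8.8(c) of \cite{Rockafellar1998Variational}) applied to both $F_1+F_2$ and $F_1+\delta_{\mathcal{X}\cap\mathcal{C}_r}$, combined with the identity $\partial F_2(A)=\partial\delta_{\mathcal{X}\cap\mathcal{C}_r}(A)$ obtained by transcribing Lemmas~\ref{regu} and~\ref{f2} to the rank setting. The paper compresses the latter step to ``similar to the proof of Lemma~\ref{f2}''; you have simply spelled out the details (including the correct observation that no regularity of $\delta_{\mathcal{X}\cap\mathcal{C}_r}$ is needed or available), and your bookkeeping for the ``moreover'' clause matches what the paper leaves implicit.
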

\begin{proof}
	Since $F_1$ is smooth and $F_2(A):=\mathsf{rank}\, G(A) + \delta_{\mathcal{X}}(A)$ is finite,
	by Exercise 8.8c in \cite{Rockafellar1998Variational},
	we have $\partial F(A) = \nabla F_1(A)     + \partial F_2(A)$ and $\partial ( F_1 +  \delta_{\mathcal{X} \cap \mathcal{C}_{r}})(A) =\nabla  F_1(A) + \partial \delta_{\mathcal{X} \cap \mathcal{C}_{r}} (A)$.
	Similar to the proof of Lemma \ref{f2}, we have $\partial F_2(A) = \partial \delta_{\mathcal{X} \cap \mathcal{C}_{r} }(A)$ which yields the result.
\end{proof}

This result means a critical point of $F$ is equivalent to a critical point of some $\mathcal{Q}_r$.

\begin{theorem} \label{localequvalenta}
	For	the set of local minimizers of $F$,
	$$
	L_F  = \bigcup_{r \in \N'}   \{ A \in \R^{M \times N} \,:\, A \text{ is a local minimizer of } \mathcal{Q}_r \} ,
	$$
	where $\qa$ is defined in \eqref{qa}.
\end{theorem}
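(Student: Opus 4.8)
The plan is to mirror, almost verbatim, the proof of Theorem \ref{localequvalent}, with the convex sets $C_{\w}$ replaced by the rank manifolds $\mathcal{C}_r$ from \eqref{cr} and the $\ell_0$ ``norm'' replaced by $\mathsf{rank}\,G(\cdot)$. The only structural difference is that $\mathcal{C}_r$ is a nonconvex manifold, so in the second inclusion we can only conclude that $\bar A$ is a \emph{local} minimizer of $\mathcal{Q}_{\bar r}$, not a global solution; this is exactly why the right-hand side is stated with ``local minimizer of $\mathcal{Q}_r$'' rather than ``solves $\mathcal{Q}_r$''. The essential ingredient is the rank analogue of Lemma \ref{regu}: since $\mathsf{rank}$ is lsc and $G$ is continuous, the integer-valued map $A \mapsto \mathsf{rank}\,G(A)$ is lsc, so for $\bar A \in \mathsf{dom}\,F$ with $\bar r := \mathsf{rank}\,G(\bar A)$ there is an open ball $B(\bar A)$ on which $\mathsf{rank}\,G(A) \geq \bar r$; consequently, writing $F_2(A) := \mathsf{rank}\,G(A) + \delta_{\mathcal{X}}(A)$, at each $A \in B(\bar A)$ exactly one of the following holds: $F_2(A) = F_2(\bar A)$, which happens precisely when $A \in B(\bar A) \cap \mathcal{C}_{\bar r} \cap \mathcal{X}$, or $F_2(A) \geq F_2(\bar A) + 1$.

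For the inclusion $\supseteq$, suppose $\bar A$ is a local minimizer of $\mathcal{Q}_r$ for some $r \in \N'$. Then $\bar A$ is feasible, so $r = \mathsf{rank}\,G(\bar A) = \bar r$, and there is a neighbourhood on which $F_1(A) \geq F_1(\bar A)$ for all $A \in \mathcal{X} \cap \mathcal{C}_{\bar r}$; shrink $B(\bar A)$ to lie inside it. Split $B(\bar A)$ into $B_1 := B(\bar A) \cap \mathcal{C}_{\bar r} \cap \mathcal{X}$ and its complement. On $B_1$ one has $F_2(A) = F_2(\bar A)$ and $F_1(A) \geq F_1(\bar A)$, hence $F(A) \geq F(\bar A)$; on the complement one has $F_2(A) \geq F_2(\bar A) + 1$, and since $F_1$ is continuous there is a further neighbourhood $\mathcal{O}(\bar A)$ with $F_1(A) \geq F_1(\bar A) - 1$ on it, so $F(A) \geq F(\bar A)$ there as well. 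Intersecting the neighbourhoods gives $\bar A \in L_F$. This is the word-for-word analogue of the first half of the proof of Theorem \ref{localequvalent}.

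For the inclusion $\subseteq$, suppose $\bar A \in L_F$ and set $\bar r := \mathsf{rank}\,G(\bar A)$. Then $\bar A$ is, a fortiori, a local minimizer of $F$ restricted to the feasible set $\mathcal{X} \cap \mathcal{C}_{\bar r}$ of $\mathcal{Q}_{\bar r}$. By the rank analogue of Lemma \ref{regu}, $F_2 \equiv F_2(\bar A)$ on $\mathcal{X} \cap \mathcal{C}_{\bar r}$ near $\bar A$, so that $F = F_1 + F_2(\bar A)$ there; hence $F_1(A) \geq F_1(\bar A)$ for all $A \in \mathcal{X} \cap \mathcal{C}_{\bar r}$ near $\bar A$, i.e.\ $\bar A$ is a local minimizer of $\mathcal{Q}_{\bar r}$, which lies in the union on the right-hand side.

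The main point to be careful about — and the reason this statement is genuinely weaker than Theorem \ref{localequvalent} — is precisely that $\mathcal{C}_r$ is nonconvex: the convexity step used at the end of the proof of Theorem \ref{localequvalent} (promoting a local minimizer of $\q$ to a global solution) has no counterpart here, and likewise Proposition \ref{adda} cannot be sharpened into a rank version of Theorem \ref{crlo}; the later counterexample in Section \ref{rankr} shows that a critical point of $\mathcal{Q}_r$ need not even be a local minimizer of $\mathcal{Q}_r$. Apart from this, the argument is a routine transcription of Section \ref{l0r}, the only new analytic input being the lower semicontinuity of the rank underlying the dichotomy for $F_2$; one should also double-check, as above, the matching of the three neighbourhoods (from the $F_2$-dichotomy, from local optimality in $\mathcal{Q}_{\bar r}$, and from continuity of $F_1$).
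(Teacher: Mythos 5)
Your proposal is correct and is essentially the proof the paper intends: the paper itself only remarks that the argument is ``similar to that of Theorem \ref{localequvalent}'', and you carry out exactly that transcription, correctly supplying the rank analogue of Lemma \ref{regu} via lower semicontinuity of $A \mapsto \mathsf{rank}\, G(A)$ and correctly noting that nonconvexity of $\mathcal{C}_r$ forces the conclusion ``local minimizer of $\qa$'' rather than ``solves $\qa$''. No gaps.
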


The proof of above theorem is similar to that of Theorem \ref{localequvalent}.
Now, we have that the local minimizer of $F$ is necessarily a local minimizer of some $\mathcal{Q}_r$.
However, the feasible domain $\mathcal{C}_{r}$ of $\qa$ is generally nonconvex.
For example, when $G$ is the identity map,
$\mathcal{C}_{r} = \{A \in \mathbf{R}^{M \times N} : \mathsf{rank}\, A = r\}$ is nonconvex.
Thus, a critical point of $\mathcal{Q}_r$ is not necessarily a local minimizer of $\mathcal{Q}_r$.
Therefore, combining proposition \ref{adda} and theorem \ref{localequvalenta} gives that a critical point of $F$ is not necessarily a local minimizer of $F$;
see Example \ref{noA}.

\begin{example}\label{noA}
We consider the following minimization problems:
$$
\min_{A \in \R^{2 \times 2}} \quad F(A) := \left\| A - \begin{bmatrix} 2&1\\ 1&2 \end{bmatrix}   \right\|^2_F + \mathsf{rank}\, A,
$$
and
\begin{equation*}
(\mathcal{Q}_1) \qquad
\begin{aligned}
\min_{A \in \R^{2 \times 2}} \;\; & F_1(A) := \left\| A - \begin{bmatrix} 2&1\\ 1&2 \end{bmatrix}   \right\|^2_F ,\\
\st\;\;\;  &\mathsf{rank}\, A = 1.
\end{aligned}
\end{equation*}

Denote $ \bar{A} = \begin{bmatrix} 0.5 &-0.5 \\ -0.5 & 0.5 \end{bmatrix} $ and $\mathcal{C}_1 = \{A: \mathsf{rank}\, A = 1 \}$.
Then the column space of $\bar{A}$ is $\{ k(1,-1)^T \}$ and its row space is $\{ k(1,-1) \}$.
Since $-\nabla F_1(\bar{A}) = \begin{bmatrix} 3&3 \\ 3&3 \end{bmatrix} = \left( \begin{matrix} 3 \\ 3 \end{matrix} \right) (1 \quad1) \in \partial \delta_{\mathcal{C}_1}(\bar{A})$ by Lemma \ref{space},
we have $0 \in \partial (\,F_1 +  \delta_{ \mathcal{C}_{1}}\,)(\bar{A})$,
meaning that $\bar{A}$ is a critical point of $(\mathcal{Q}_1)$.
By Proposition \ref{adda},
$\bar{A}$ is a critical point of $F$.

However, for all $\epsilon \rightarrow 0$ with $ \epsilon <1$,
we have $\mathsf{rank}\, \bar{A}_{\epsilon} =1 $
where  $\bar{A}_{\epsilon} = \begin{bmatrix} 0.5+\epsilon &-0.5 \\ -0.5 & 0.5+ \frac{-0.5\epsilon}{0.5+\epsilon} \end{bmatrix} $,
and
$$
F_1(\bar{A}_{\epsilon})- F_1(\bar{A})  = \frac{\epsilon^2( \epsilon^2 - 2\epsilon-1  ) }{(0.5+\epsilon)^2}<0 .
$$
Thus, $\bar{A}$ is not a local minimizer of $(\mathcal{Q}_1)$.
Since $\mathsf{rank}\, \bar{A} =1 $,
$\bar{A}$ is neither a local minimizer of $(\mathcal{Q}_0)$ or $(\mathcal{Q}_2)$ defined as \eqref{qa}.
By Theorem \ref{localequvalenta},
$\bar{A}$ is not a local minimizer of $F$.

\end{example}

\section{Conclusion}
In this paper, we mainly showed that every critical point of an $\ell_0$ composite minimization problem is a local minimizer,
a result that is not necessarily true for rank minimization models.

\section*{Acknowledgement}
This work was supported by the National Natural Science Foundation of China (NSFC) (Nos. XXX).

\bibliographystyle{abbrv}
\bibliography{refer}

\end{document}